\newcommand{\erw}{elephant random walk}
\newcommand{\cfn}{{\cal F}_n}
\newcommand{\cgn}{{\cal G}_n}
\newcommand{\ind}{ 1\hspace{-1mm}1}
\newcommand{\memn}{{\mathfrak M}_n}
\newcommand{\xnp}{X_{n+1}}
\newcommand{\COM}[1]{}
\newtheorem{theorem}{Theorem}[section]
\newtheorem{lemma}{Lemma}[section]
\newtheorem{proposition}{Proposition}[section]
\newtheorem{remark}{\normalfont\scshape Remark}[section]
\newenvironment{proof}{\noindent\textsc{Proof.\/}}{}
\newcommand{\subj}[2]{\textsf{AMS 2000 subject classifications.}
Primary {#1}; Secondary {#2}.\newline}
\newcommand{\key}[1]{\textsf{Keywords and phrases.} {#1}.\newline}
\newcommand{\abb}[1]{\textsf{Abbreviated title.} {#1}.}
\newcommand{\fot}[5]{\renewcommand\thefootnote{}
\footnotetext{\parindent=0.0mm \vskip-3mm \subj{#1}{#2}\key{#3}\abb{#4}
\newline\textsf{Date.} \date{\today}}}
\def\vsb{\hfill$\Box$}
\def\vsp{\vskip-8mm\hfill$\Box$\vskip3mm}
\newcommand{\veps}{\varepsilon}
\newcommand{\be}{\begin{equation}}
\newcommand{\ee}{\end{equation}}
\newcommand{\bea}{\begin{eqnarray}}
\newcommand{\eea}{\end{eqnarray}}
\newcommand{\beaa}{\begin{eqnarray*}}
\newcommand{\eeaa}{\end{eqnarray*}}
\newcommand{\beal}{\begin{aligned}}
\newcommand{\eeal}{\end{aligned}}
\newcommand{\var}{\mathrm{Var\,}}
\newcommand{\sumk}{\sum^n_{k=1}}
\newcommand{\ttt}[1]{\quad\mbox{ #1}\quad}
\newcommand{\asto}{\stackrel{a.s.}{\to}}
\newcommand{\dto}{\stackrel{d}{\to}}
\newcommand{\nifi}{n\to\infty}
\newcommand{\kifi}{k\to\infty}
\begin{document}
\date{}
\title{\textsf{The number of zeroes in Elephant random walks with delays}}
\author{Allan Gut\\Uppsala University \and Ulrich Stadtm\"uller\\
Ulm University}
\maketitle

\begin{abstract}\noindent
In the simple random walk the steps are independent, whereas in the \erw\ (ERW), which was introduced by Sch\"utz and Trimper in 2004 \cite{erwdef}, the next step always depends on the whole path so far. In an earlier paper  we investigated \erw s when the elephant has a restricted memory. Inspired by a suggestion by Bercu et al.\ \cite{bercu2} we extended our results to the case when delays are allowed.  In this paper we examine how  the number of delays (that possibly stop the process) increases as time goes by.
\end{abstract}

\fot{60F05, 60G50,}{60F15, 60J10}{Elephant random walk, delay, number of zeroes, law of large numbers, central limit theorem, Markov chain}
{ERW with delays}

\section{Introduction}
\setcounter{equation}{0}
\markboth{A.\ Gut and U.\ Stadtm\"uller}{Elephant random walk}

In the classical \emph{simple\/} random walk the steps are equal to plus or minus one and independent---$P(X=1)=1-P(X=-1)=p$, ($0<p<1$); the walker has no memory.  Motivated by applications, although interesting in its own right, is the so called \erw\ (ERW),  for which every step depends on the whole process so far. The ERW  was introduced in \cite{erwdef} in 2004, the name being inspired by the fact that elephants have a very long memory. In \cite{111} we studied the case when the elephant has a restricted memory; assuming that he or she remembers only some distant past, only a recent past, or a mixture of both. Inspired by a suggestion in \cite{bercu2} we allowed, in \cite{113}, the possibility of delays in that the elephant, in addition, always has a choice of staying put. 

Formally, the \erw\ is defined as a  random walk in which the first step $X_1$ equals 1 with probability $s\in [0,1]$ and  to $-1$ with probability $1-s$, where, for convenience, we assume that $s=p$. After $n$ steps, at position $S_n=\sumk X_k$, the next step is defined as
\bea\label{20}\xnp=\begin{cases} +X_{K},\ttt{with probability} p\in[0,1], \\-X_{K},\ttt{with probability} 1-p,\end{cases}\eea
where $K$ has a uniform distribution on the integers $1,2,\ldots,n$. 

In \cite{111} we studied the case of a restircted memory, which means that $K$ in (\ref{20}) is uniform over the set of points constituted by the memory, with the additional possibility of $0$ in \cite{113}. Letting $\memn$ denote the set of integers that constitute the memory up to time $n$, the rule for a next step is governed by
\bea\label{23}\xnp=\begin{cases} +X_{K},\ttt{with probability} p\in[0,1], \\-X_{K},\ttt{with probability} q\in[0,1],\\\phantom{+)}0,\ttt{\,\,  with probability}r\in[0,1],\end{cases}\eea
where  $p+q+r=1$, and where $K$ has a uniform distribution over the integers in $\memn$. In particular, if $\memn=\{1,2,\ldots,n\}$ the setting reduces to that suggested in \cite{bercu2}, and if, in addition, $r=0$ we are back in \cite{bercu}.
Let us here only mention that the evolution of the various  \erw s may differ considerably depending on the actual memory. For example if the memory consists of the most recent step only, the process stops as soon as a zero appears. In other cases one has a central limit theorem.

It is clear from the construction that the number of zeroes is monotone increasing (non-decreasing) to infinity. The number of ones is also increasing, although maybe not to infinity.  Motivated by a remark from Svante Janson we investigate, in this paper, the growth rate of zeroes for the various setups. Toward that end we define, for $n \in\mathbb{N}$, 
\bea\label{in}
I_n=\ind_{\{X_n=0\}}\ttt{with}N_n=\sum_{k=1}^n I_k.
\eea
Since it is natural to expect that $N_n\approx n$ in some sense as $n$ tends to infinity it is not surprising that it is mathematically more convenient to study how $N_n$ ''approaches'' $n$, that is, to investigate the difference
$n-N_n = $ the number of ones. We therefore also introduce
\bea\label{istern}
 I^*_n=\ind_{\{X_n\neq0\}}\ttt{with}N^*_n=\sum_{k=1}^n I^*_k.
\eea
Any result for the latter one can easily be transferred to the nonstarred one via the fact that $N_n+N^*_n =n$.

After some preliminaries in Section \ref{prel} we present, in subsequent sections, our results for the various memory models, after which we conclude with some remarks. In order to avoid special boundary effects we assume throughout that $0<p,q,r<1$. The standard $\delta_a(x)$ is used to  denote the distribution function with a jump of height one at $a$, $|A|$ denotes the cardinality of a set $A$, and $c$ and $C$ are numerical constants that may change between appearances.

\section{Preliminaries}
\setcounter{equation}{0}\label{prel}
\textbf{(i)}\quad In \cite{bercu} the behavior of the next step is governed by the relation $E(\xnp\mid \cfn) =(2p-1)\cdot\frac{S_n}{n}$,
since in that case $r=0$. The relation remains true if $r>0$ with $2p-1$ replaced by $p-q$. 

Our first tool is an analog for ERW:s with a restricted memory. Therefore, let  $\{\cfn,\,n\geq1\}$ denote the   $\sigma$-algebras generated by the memory  $\memn$, and let $\cgn= \sigma\{X_1,X_2,\ldots,X_n\}$.  Then,
\bea\label{uli}
E(\xnp\mid \cfn)
=(p-q)\cdot\frac{\sum_{i\in \memn}X_i}{|\memn|}.
\eea
When we condition on steps that are not contained in the memory it means that the elephant does not remember them, and, hence, cannot choose among them in a following step. Thus, if $A\subset \{1,2,\ldots,n\}$ is an arbitrary set of indices, such that $A\cap \memn=\emptyset$, then
\bea\label{uli2}
E(\xnp\mid \sigma\{A\cup \memn\})=E(\xnp\mid\cfn)=(p-q)\frac{\sum_{i\in \memn}X_i}{|\memn|}.
\eea

\noindent\textbf{(ii)}\quad
We also need some well-known facts about linear difference equations. 
\begin{proposition}\label{diff}
Consider the first order equation
\[x_{n+1}=a\, x_n +b_n \ttt{for} n \ge 1, \ttt{with} x^*_1 \ttt{given.}\]
Then
\[x_n=a^{n-1}x^*_1+\, \sum_{\nu=0}^{n-2}a^\nu b_{n-1-\nu}.\]
If, in addition, $|a|<1$ and $b_n=bn^\gamma$ with $\gamma>-1$, then
\[x_n=\frac{b_{n-1}}{1-a} -\frac{\gamma ab_{n-1}}{n(1-a)^2}\big(1+o(1)\big)\ttt{as}\nifi.\]
\end{proposition}

\noindent\textbf{(iii)}\quad
Next is a martingale lemma.
\begin{lemma}\label{HMG}
Let $\{U_n,\, n\geq1\}$ be a sequence of random variables adapted to $\cfn,\, n\geq1$, with
\[ E (U_{n+1} \mid\cfn)= a_n\,U_n +b_n \quad \mbox{ for } n \ge 1\]
with two squences $\{a_n\}$ and $\{b_n\}$,  $n\geq1$, where $a_n\not= 0$ for all $n$.
Then 
\[\{(M_n= \alpha_n \,U_n +\beta_n,\cfn)\,,n\geq1\}\ttt{is a martingale,}\]
  where  $\alpha_1=1$, $\beta_1=0$ and
\[ \alpha_n=\prod_{k=1}^{n-1} \frac{1}{a_k} \ttt{ and } \beta_n=-\sum_{k=1}^{n-1}\alpha_{k+1}\,b_k \ttt{for } n\ge 2\,.\]
\end{lemma}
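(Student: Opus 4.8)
The plan is to verify the three defining properties of a martingale for $\{M_n\}$ directly: adaptedness, integrability, and the one-step identity $E(M_{n+1}\mid\cfn)=M_n$. Adaptedness is immediate, since $\alpha_n$ and $\beta_n$ are deterministic while $U_n$ is $\cfn$-measurable by hypothesis, so $M_n=\alpha_n U_n+\beta_n$ is $\cfn$-measurable; integrability of $M_n$ reduces to integrability of $U_n$ (implicit in the assumption that $E(U_{n+1}\mid\cfn)$ is given), again because $\alpha_n,\beta_n$ are constants. Hence the whole substance of the proof is the conditional-mean identity.

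Before computing, I would recast the closed forms of $\alpha_n$ and $\beta_n$ as one-step recursions. From $\alpha_n=\prod_{k=1}^{n-1}a_k^{-1}$ one reads off $\alpha_{n+1}=\alpha_n/a_n$, equivalently $a_n\alpha_{n+1}=\alpha_n$; and from $\beta_n=-\sum_{k=1}^{n-1}\alpha_{k+1}b_k$ one reads off $\beta_{n+1}=\beta_n-\alpha_{n+1}b_n$. Both are consistent with the initial values $\alpha_1=1,\ \beta_1=0$, the case $n=1$ giving $\alpha_2=1/a_1$ and $\beta_2=-\alpha_2 b_1$. These two relations are exactly the gearing that makes the subsequent computation telescope.

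With these in hand the calculation is short. Using linearity of conditional expectation together with the fact that $\alpha_{n+1},\beta_{n+1}$ are nonrandom, and then the hypothesis $E(U_{n+1}\mid\cfn)=a_nU_n+b_n$,
\[
E(M_{n+1}\mid\cfn)=\alpha_{n+1}\,E(U_{n+1}\mid\cfn)+\beta_{n+1}
=\alpha_{n+1}\,(a_nU_n+b_n)+\beta_{n+1}.
\]
Substituting $a_n\alpha_{n+1}=\alpha_n$ in the first term and $\beta_{n+1}=\beta_n-\alpha_{n+1}b_n$ in the last, the two $\alpha_{n+1}b_n$ contributions cancel and one is left with $\alpha_nU_n+\beta_n=M_n$, which is the martingale property.

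I do not expect a genuine obstacle here; the only points requiring care are that $a_n\neq 0$ for every $n$ (so the products $\alpha_n$ are well defined and nonzero and the recasting above is legitimate), and that $\alpha_n,\beta_n$ are deterministic so that they pass through the conditional expectation. The content of the lemma is precisely that the deterministic affine transformation $U_n\mapsto \alpha_nU_n+\beta_n$ simultaneously removes the multiplicative drift $a_n$ and the additive drift $b_n$ from the conditional-mean recursion, and the verification above confirms that the stated $\alpha_n,\beta_n$ achieve exactly this.
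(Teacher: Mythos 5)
Your proof is correct and is exactly the verification the paper has in mind: the paper states only that ``the proof amounts to checking that the martingale condition is satisfied'' and omits the details, which you have supplied via the recursions $\alpha_{n+1}=\alpha_n/a_n$ and $\beta_{n+1}=\beta_n-\alpha_{n+1}b_n$. No gaps.
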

The proof  amounts to checking that the martingale condition is satisfied. We omit the details.\medskip

\noindent\textbf{(iii)}\quad Finally some asymtotics related to the Gamma-function.    
\begin{lemma}\label{Gamma}
For $x \in \mathbb{R}$,
\[ \frac{\Gamma(n+1+x)}{\Gamma (n+1)} = n^x \Big( 1+\frac{x(1+x)}{2n} +{\cal O}(n^{-2})\Big)\ttt{as}\nifi.\]
\end{lemma}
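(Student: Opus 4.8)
The plan is to pass to logarithms and apply Stirling's asymptotic series for $\log\Gamma$. Writing $f(z)=\log\Gamma(z)$, recall that $f(z)=(z-\tfrac12)\log z - z +\tfrac12\log(2\pi)+\frac{1}{12z}+{\cal O}(z^{-3})$ as $z\to\infty$. First I would form the difference $f(n+1+x)-f(n+1)$: the constant $\tfrac12\log(2\pi)$ cancels, the two $\frac{1}{12z}$ contributions combine to $\frac{1}{12}\cdot\frac{-x}{(n+1)(n+1+x)}={\cal O}(n^{-2})$, and the $-z$ terms contribute $-x$. This isolates the two $(z-\tfrac12)\log z$ pieces as the only term that must be expanded carefully.

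Setting $m=n+1$, that main term is $(m-\tfrac12+x)\log(m+x)-(m-\tfrac12)\log m$. Expanding $\log(m+x)=\log m+\frac{x}{m}-\frac{x^2}{2m^2}+{\cal O}(m^{-3})$ and collecting, the $\log m$ coefficients give exactly $x\log m$, while the lower-order pieces combine to $x+\frac{x(x-1)}{2m}+{\cal O}(m^{-2})$. Adding the $-x$ found above cancels the stray $x$, so that
\[ \log\frac{\Gamma(n+1+x)}{\Gamma(n+1)} = x\log(n+1)+\frac{x(x-1)}{2(n+1)}+{\cal O}(n^{-2}). \]
Exponentiating and using $\exp(u)=1+u+{\cal O}(u^2)$ gives the intermediate form $\frac{\Gamma(n+1+x)}{\Gamma(n+1)}=(n+1)^x\bigl(1+\frac{x(x-1)}{2(n+1)}+{\cal O}(n^{-2})\bigr)$.

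The final, and most error-prone, step is to re-express this in powers of $n$ rather than $n+1$. Using $(n+1)^x=n^x(1+\tfrac1n)^x=n^x\bigl(1+\frac{x}{n}+{\cal O}(n^{-2})\bigr)$ together with $\frac{1}{n+1}=\frac1n+{\cal O}(n^{-2})$, I would multiply out the two bracketed factors and retain terms through order $n^{-1}$. The main obstacle is purely this bookkeeping: the correction $\frac{x}{n}$ arising from the change of base must be added to $\frac{x(x-1)}{2n}$, and one must verify that $\frac{x}{n}+\frac{x(x-1)}{2n}=\frac{x(x+1)}{2n}$ so that the stated coefficient $\frac{x(1+x)}{2}$ emerges, with all discarded cross terms genuinely of order ${\cal O}(n^{-2})$. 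As an alternative route avoiding Stirling one could instead quote the classical two-term ratio asymptotic $\Gamma(z+a)/\Gamma(z+b)=z^{a-b}\bigl(1+\frac{(a-b)(a+b-1)}{2z}+{\cal O}(z^{-2})\bigr)$ with $z=n+1$, $a=x$, $b=0$, and then carry out the same change of base.
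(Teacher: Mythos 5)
Your proof is correct and rests on the same idea as the paper's: a second-order Stirling expansion of the Gamma ratio. The only differences are cosmetic --- you work additively with the asymptotic series for $\log\Gamma$ centred at $n+1$ and then change base from $(n+1)^x$ to $n^x$, whereas the paper expands the product form of Stirling's formula directly at $n$, which avoids that last bookkeeping step; your final coefficient $\frac{x}{n}+\frac{x(x-1)}{2n}=\frac{x(1+x)}{2n}$ checks out.
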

\begin{proof} By Stirling's asymptotic formula,
\begin{eqnarray*}
\frac{\Gamma(n+1+x)}{\Gamma(n+1)}&=& \frac{\Big(\frac{n+x}{e}\Big)^{n+x}\sqrt{2\pi (n+x)}\Big(1+\frac{1}{12n}+{\cal O}(n^{-2})\Big)}{\Big(\frac{n}{e}\Big)^{n}\sqrt{2\pi (n)}\Big(1+\frac{1}{12n}+{\cal O}(n^{-2})\Big)}\\
&=& n^x e^{-x} (1+\frac{x}{n})^{n+x}\sqrt{1+\frac{x}{n}}\Big(1+\frac{1}{12n}- \frac{1}{12n}
+{\cal O}(n^{-2})\Big)\,.
\end{eqnarray*}
Next,
\begin{eqnarray*}
\Big(1+\frac{x}{n}\Big)^{n+x}&=&\exp\Big(\log(1+\frac{x}{n})\,(n+x)\Big)
=\exp\Big( \Big(\frac{x}n-\frac{x^2}{2n^2}+{\cal O}(n^{-2})\Big) (n+x)\Big)\\
&=&\exp\Big(x+ \frac{x^2}{n}-\frac{x^2}{2n}+{\cal O}(n^{-2})\Big)
=e^x \big(1+ \frac{x^2}{2n} +{\cal O}(n^{-2})\Big)\,.
\end{eqnarray*}
Thus,
\begin{eqnarray*}
\frac{\Gamma(n+1+x)}{\Gamma(n+1)}&=& n^x\Big(1+\frac{x}{2n} +{\cal O}(n^{-2})\Big)\Big(1+\frac{x^2}{2n}+{\cal O}(n^{-2})\Big)\\
&=&n^x\Big( 1+\frac{x\,(1+x)}{2n}+{\cal O}(n^{-2})\Big)\,.
\end{eqnarray*}\vsp
\end{proof}

\section{The case $\cfn =\sigma\{X_1,X_2,\ldots,X_n\}$}
\setcounter{equation}{0}
We first note that $E (I^*_1)=1-r$ and that $E(I^*_2)=(1-r)^2$. Furthermore,
\[ E (I^*_{n+1}\,\big|\, {\cal F}_n)=(
1-r) \cdot \frac{1}{n}\sum_{k=1}^n \ind_{ \{I^*_k\not=0 \} } = \frac{1-r}{n}N^*_n,\]
and thus
\[ E (N^*_{n+1}\,\big|\, {\cal F}_n)=\frac{1-r}{n}N^*_n+N^*_n,\]
so that
\[E(N^*_{n+1})=\prod_{k=1}^n \frac{k+1-r}{k} E(N^*_1)=\frac{1-r}{\Gamma(2-r)}\cdot \frac{\Gamma(n+1+(1-r))}{\Gamma(n+1)}= \frac{\Gamma(n+2-r)}{\Gamma(1-r)\Gamma(n+1)}\,.\]
Lemma \ref{Gamma} now tells us that
\bea\label{erwartung}
 E(N^*_{n+1})= \frac{n^{1-r}}{\Gamma(1-r)}+ n^{-r}\frac{(1-r)(2-r)}{2\Gamma(1-r)} +{\cal O}(n^{-r-1})\,.\eea
Next,
\begin{eqnarray*}
E((N^*_{n+1})^2 \, \big| \,{\cal F}_n)&=& (N^*_n)^2+E((I^*_{n+1})^2\,\big|\, {\cal F}_n)+2 N^*_n E(I^*_{n+1}\,\big|\, {\cal F}_n)\\
&=&E((N^*_n)^2)\cdot\Big(1+\frac{2(1-r)}{n}\Big) + \frac{1-r}{n} N^*_n,
\end{eqnarray*}
and thus
\begin{eqnarray*}
E((N^*_{n+1})^2)&=& E((N^*_n)^2)\cdot\Big(1+\frac{2(1-r)}{n}\Big) + \frac{1-r}{n} E(N^*_n)\\
&=&E((N^*_n)^2)\cdot\Big(1+\frac{2(1-r)}{n}\Big) + \frac{1-r}{n\,\Gamma(1-r)} \frac{\Gamma(n+(1-r))}{\Gamma(n)}\,.\\
\end{eqnarray*}
Induction shows that
\begin{eqnarray*}
\lefteqn{E((N^*_{n+1})^2)=}\\
&=&E((N^*_1)^2)\,\prod_{k=0}^{n-1} \Big(1+\frac{2(1-r)}{n-k}\Big) +\frac{(1-r)}{\Gamma(1-r)} \sum_{k=0}^{n-1} \frac{\Gamma(n-k+1-r)}{\Gamma(n+1-k)}\prod_{\mu=0}^{k-1}\Big(1+\frac{2(1-r)}{n-\mu}\Big)\\
&=&\frac{(1-r)}{\Gamma(3-2r)}\cdot\frac{\Gamma(n+1+2(1-r))}{\Gamma(n+1)}+\frac{(1-r)}{\Gamma(1-r)} \sum_{k=1}^{n} \frac{\Gamma(k+1-r)}{\Gamma(k+1)}\prod_{m=k+1}^{n}\Big(1+\frac{2(1-r)}{m}\Big)\\
&=&\frac{(1-r)}{\Gamma(3-2r)}\cdot\frac{\Gamma(n+1+2(1-r))}{\Gamma(n+1)}+\frac{1-r}{\Gamma(1-r)}\cdot \frac{\Gamma(n+1+2(1-r))}{\Gamma(n+1)}\sum_{k=1}^n \frac{\Gamma(k+(1-r))}{\Gamma(k+1+2(1-r))} \\
&=&\frac{\Gamma(n+1+2(1-r))}{\Gamma(n+1)}\cdot\Big( \frac{1-r}{\Gamma(3-2r)} +\frac{1-r}{\Gamma(1-r)}c_r (1+o(1)) \Big)\\
&=&d_r\,n^{2(1-r)} +o(n^{2(1-r)})\ttt{as}\nifi,\eeaa
where (observe Lemma \ref{Gamma})
\[ c_r=\sum_{k=1}^\infty \frac{\Gamma(k+1-r)}{\Gamma(k+1-r +2-r)}<\infty \ttt{ and } 
d_r=\frac{1-r}{\Gamma(1-r)} \Big(c_r + \frac{\Gamma(1-r)}{2(1-r)\Gamma(2(1-r))}\Big)\,.
\]
Using Lemma \ref{HMG} we find that
\begin{equation}
\{M^*_n=\alpha^*_n N^*_n,\,n\geq1\} \ttt{where} \alpha^*_k=\prod_{j=1}^{k-1} \frac{j}{j+1-r}\quad (k\geq1), 
\end{equation}
is a martingale. Obviously $E (M^*_1) =E (I^*_1)=1-r= E (M^*_n)$ for all $n \in \mathbb{N}$.

Next observe that 
\bea\label{alfastern}
\alpha^*_k=\prod_{j=1}^{k-1}  \frac{j}{j+1-r}=\frac{\Gamma(k) \Gamma(2-r)}{\Gamma(k+1-r)} \sim \frac{\Gamma(2-r)}{k^{1-r}}\ttt{as}\kifi.
\eea
For the martingale difference sequence $\veps^*_k=M^*_k-M^*_{k-1}$, $k\ge 2$, and $\veps^*_1=M^*_1-(1-r)$, it follows, for $n\ge 1$, that
\begin{eqnarray*}
\veps^*_{n+1}&=&\alpha^*_{n+1}\Big(N^*_n\Big(1-\frac{\alpha^*_n}{\alpha^*_{n+1}}\Big)+ I^*_{n+1}\Big)
=\alpha^*_{n+1}\Big(N^*_n\Big(1-\frac{n+1-r}{n}\Big)+ I^*_{n+1}\Big)\\
&=&\alpha^*_{n+1}\Big( I^*_{n+1}- \frac{1-r}{n} N^*_n\Big)\,.
\end{eqnarray*}
The fact that the increments $\veps^*_k$ of the martingale  are centered and uncorrelated tells us that
\begin{eqnarray*}
E((\veps^*_{n+1})^2\, \big|\, {\cal F}_n)&=& (\alpha^*_{n+1})^2\cdot\Big(\frac{(1-r)^2}{n^2}(N^*_n)^2+E(I^*_{n+1}\, \big|\, {\cal F}_n)-2\frac{1-r}{n}N^*_nE(I^*_{n+1}\, \big|\, {\cal F}_n)\Big)\\
&=&(\alpha^*_{n+1})^2\cdot\Big(-\frac{(1-r)^2}{n^2}(N^*_n)^2+\frac{1-r}{n}N^*_n\Big),
\end{eqnarray*}
and that
\begin{eqnarray*}E((\veps^*_{n+1})^2)&=&(\alpha^*_{n+1})^2\cdot
\Big(-\frac{(1-r)^2}{n^2}E((N^*_n)^2)+\frac{1-r}{n}E(N^*_n)\Big)\\
&\sim & \Gamma^2(2-r)\,n^{-2+2r}\frac{(1-r)}{\Gamma(1-r)} n^{-r}=(1-r)^3\cdot
\Gamma(1-r)\, n^{-2+r}\ttt{as}\nifi.
\end{eqnarray*}
For the brackets 
\[<M^*_n> =(\veps^*_1)^2+\sum_{k=2}^n(\alpha^*_{k})^2\cdot
\Big(-\frac{(1-r)^2}{(k-1)^2}(N^*_{k-1})^2+\frac{1-r}{k-1}N^*_{k-1}\Big)\,,\]
so that, with a glance at (\ref{erwartung}) and (\ref{alfastern}),
\[E(<M^*_n>)\leq  C\,\sum_{k=1}^n(\alpha^*_{k})^2\frac{1}{k^{r}}
\leq C\,\sum_{k=1}^\infty\frac{1}{k^{2(1-r)}}\cdot\frac{1}{k^{r}}\leq C<\infty,\]
that is, $E(<M^*_n>)={\cal O}(1)$ as $\nifi$.

Summarizing our findings leads  to the following result.
\begin{theorem}\label{thmx1}There exists a random variable $Y $ such that $M^*_n\asto Y$  as $n \to \infty$, 
and, hence, 
\[\frac{N^*_n}{n^{1-r}}=\frac{n-N_n}{n^{1-r}}\asto \frac{Y}{\Gamma(2-r)} \ttt{as}\nifi.\]
Moreover, convergence holds in $L^1$ in both cases, in particular, 
\[E\Big(\frac{N^*_n}{n^{1-r}}\Big)=E\Big(\frac{n-N_n}{n^{1-r}}\Big)\to \frac1{\Gamma(1-r)}\ttt{as}\nifi.\] 
\end{theorem}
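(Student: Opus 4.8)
The plan is to invoke the $L^2$-bounded martingale convergence theorem, since the preliminaries have already supplied everything needed. Recall that $\{M^*_n\}$ is a martingale with $E(M^*_n)=1-r$ for every $n$, whose increments $\veps^*_k$ are centred and uncorrelated and whose bracket satisfies $E(\langle M^*_n\rangle)=\mathcal{O}(1)$. First I would turn this bracket bound into $L^2$-boundedness of $M^*_n$ itself. Orthogonality of the increments gives $E((M^*_n)^2)=E((M^*_1)^2)+\sum_{k=2}^n E((\veps^*_k)^2)$, whereas the same increment variances reappear in $E(\langle M^*_n\rangle)=E((\veps^*_1)^2)+\sum_{k=2}^n E((\veps^*_k)^2)$. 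Subtracting, and using $\veps^*_1=M^*_1-(1-r)$ together with $E(M^*_1)=1-r$, collapses the difference to the constant $(1-r)^2$, so that $E((M^*_n)^2)=(1-r)^2+E(\langle M^*_n\rangle)=\mathcal{O}(1)$ and hence $\sup_n E((M^*_n)^2)<\infty$.

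With $L^2$-boundedness in hand, the martingale convergence theorem produces a random variable $Y$ with $M^*_n\asto Y$ and $M^*_n\to Y$ in $L^2$, in particular in $L^1$; this is the first assertion. To pass to $N^*_n$ I would write $\frac{N^*_n}{n^{1-r}}=\frac{M^*_n}{\alpha^*_n\,n^{1-r}}$ and read off from (\ref{alfastern}) the deterministic limit $\alpha^*_n\,n^{1-r}\to\Gamma(2-r)$. Dividing the a.s.\ convergent $M^*_n$ by this convergent deterministic factor yields $\frac{N^*_n}{n^{1-r}}\asto\frac{Y}{\Gamma(2-r)}$, and the identity $N_n+N^*_n=n$ rewrites the left-hand side as $\frac{n-N_n}{n^{1-r}}$.

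For the $L^1$ statements, set $c_n=(\alpha^*_n\,n^{1-r})^{-1}$, a bounded deterministic sequence with $c_n\to 1/\Gamma(2-r)$. The triangle inequality $\|c_nM^*_n-Y/\Gamma(2-r)\|_1\le |c_n|\,\|M^*_n-Y\|_1+|c_n-1/\Gamma(2-r)|\,\|Y\|_1$ then gives $L^1$-convergence of $\frac{N^*_n}{n^{1-r}}$. Finally, $L^1$-convergence of $M^*_n$ transfers expectations, $E(Y)=\lim_n E(M^*_n)=1-r$, whence $E\big(\frac{N^*_n}{n^{1-r}}\big)\to \frac{1-r}{\Gamma(2-r)}=\frac{1}{\Gamma(1-r)}$ by $\Gamma(2-r)=(1-r)\Gamma(1-r)$; as a consistency check this matches the leading term of (\ref{erwartung}).

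Since the two genuinely hard computations — the explicit increments $\veps^*_k$ and the bound $E(\langle M^*_n\rangle)=\mathcal{O}(1)$ — are already carried out, there is no serious obstacle left; the only point demanding care is the first step, where $L^2$-boundedness must be extracted from the bracket through the increment-variance identity rather than estimated afresh.
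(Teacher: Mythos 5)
Your proof is correct and follows essentially the same route as the paper's: martingale convergence for $M^*_n$ followed by the deterministic asymptotics $\alpha^*_n\,n^{1-r}\to\Gamma(2-r)$ from (\ref{alfastern}), with $L^1$-convergence coming from second-moment boundedness and the expectation limit agreeing with (\ref{erwartung}). The only cosmetic difference is that the paper obtains a.s.\ convergence from nonnegativity of the martingale and leaves the boundedness of the variances implicit, whereas you extract $L^2$-boundedness explicitly from the bracket bound via the increment-variance identity --- a detail the paper glosses over.
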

\begin{proof}
As $\{M^*_n\,,n\geq1\}$ is a nonnegative martingale almost sure convergence for the starred process follows via a standard convergence result, see, e.g., p.\ 510 in \cite{g13}. This, together with the fact that $N_n=n- N^*_n$, verifies the first conclusion. Mean convergence is a consequence of the boundedness of the variances. The limit of the expected values is immediate from (\ref{erwartung}). \vsb
\end{proof}
\begin{remark}\emph{(i) It follows from the convergence of the expected values that $E(Y)=1-r$.\\[1mm]
(ii) A similar calculation as above shows that $E((N^*_n/n^{1-r})^3)$ is bounded, and, hence,
\[ \var\Big( \frac{N^*_n}{n^{1-r}}\Big) \to d_r-\frac1{(\Gamma(1-r))^2}\ttt{and} 
\var(Y)=(1-r)^2\Big(\frac{d_r}{(\Gamma(1-r))^2}-1\Big)\,.\] }\vsp\end{remark}

\section{The case $\cfn =\sigma\{X_1\}$}
\setcounter{equation}{0}
This is the easy case. Since every indicator depends on $X_1$ only, it follows that the indicators are independent and indentically distributed. Moreover, $E(I_1^*)= 1-r$ and $E(I_{n+1}^*\mid X_1)=(1-r)\,|X_1|$ for all $n$, which imples that given $I^*_1=1\,,$
$\{N_n^*\,, n\geq1\}$, is a binomial random walk with probability $1-r$ and $\equiv 0$ with probability $r$ and is identically zero if $I^*_1=0.$ 

\begin{theorem}\label{thmx2} In this case\\[1.5mm]
\emph{(a)}  $\hspace*{4mm} \displaystyle{ \frac{N^*_n}{n} \asto (1-r)^2   \ttt{as} \nifi\,,}$\\[2mm]
\emph{(b)}  $\hspace*{4mm} \displaystyle{\frac{N_n^*-n(1-r)I_1^*}{\sqrt{n}}\dto (1-r){\cal N}_{0,r(1-r)}+r\,\, \delta_0(x)\ttt{as}\nifi.}$\\[2mm]
For the number of zeroes we obtain\\[1.5mm]
\emph{(c)}  $\hspace*{4mm} \displaystyle{ \frac{N_n}{n} \asto r\,(2-r) \ttt{as} \nifi\,,}$\\[2mm]
\emph{(d)}  $\hspace*{4mm} \displaystyle{\frac{N_n-nr-n(1-r)I_1}{\sqrt{n}}\dto (1-r){\cal N}_{0,r(1-r)}+r\,\, \delta_0(x)\ttt{as}\nifi.}$
\end{theorem}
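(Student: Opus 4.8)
The plan is to exploit the conditional structure already isolated in the text: the whole process is governed by the single variable $X_1$. I would first record the two regimes explicitly. On the event $\{I_1^*=0\}=\{X_1=0\}$, which has probability $r$, every later step is forced to be $0$ (since $X_K=X_1=0$), so $N_n^*\equiv 0$ for all $n$. On the complementary event $\{I_1^*=1\}=\{X_1\neq 0\}$, of probability $1-r$, the stated identity $E(I_{n+1}^*\mid X_1)=(1-r)|X_1|$ shows that $I_2^*,I_3^*,\dots$ are i.i.d.\ $\Be(1-r)$ variables (independent of the degenerate value $I_1^*=1$), so that $N_n^*=1+\sum_{k=2}^n I_k^*$ is a shifted binomial sum. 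Everything then reduces to the law of large numbers and the central limit theorem for i.i.d.\ Bernoulli summands, combined across the two regimes by conditioning on $I_1^*$.

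For part (a) I would apply the strong law to the conditional i.i.d.\ sequence: on $\{I_1^*=1\}$, $N_n^*/n=(1+\sum_{k=2}^n I_k^*)/n\to 1-r$ almost surely, while on $\{I_1^*=0\}$ the ratio is identically $0$. Hence $N_n^*/n\to (1-r)I_1^*$ a.s., a two-valued random variable whose expectation is $(1-r)E(I_1^*)=(1-r)^2$; this is the content of (a). Part (c) is then immediate from the deterministic identity $N_n=n-N_n^*$, which gives $N_n/n\to 1-(1-r)I_1^*$ a.s.\ with mean $1-(1-r)^2=r(2-r)$.

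For part (b) I would run the central limit theorem separately on each regime and mix. On $\{I_1^*=1\}$ the random centering is $n(1-r)I_1^*=n(1-r)$, and since $\sum_{k=2}^n I_k^*$ has mean $(n-1)(1-r)$ and variance $(n-1)r(1-r)$, the deterministic correction terms are $O(1/\sqrt n)$ and the classical CLT yields $(N_n^*-n(1-r))/\sqrt n\dto \mathcal N_{0,r(1-r)}$; on $\{I_1^*=0\}$ the same centered quantity is identically $0$, hence has limit $\delta_0$. Conditioning on the value of $I_1^*$ and using the law of total probability assembles these into the announced mixture $(1-r)\mathcal N_{0,r(1-r)}+r\,\delta_0(x)$. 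Finally, for part (d) I would substitute $N_n=n-N_n^*$ and $I_1=1-I_1^*$ into the centered statistic; a direct cancellation shows that $(N_n-nr-n(1-r)I_1)/\sqrt n = -(N_n^*-n(1-r)I_1^*)/\sqrt n$, so (d) follows from (b) since the limiting mixture is symmetric about $0$ and thus invariant under a sign change.

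There is no serious obstacle here — the authors rightly call this the easy case. The only points requiring care are bookkeeping ones: keeping the random centering $n(1-r)I_1^*$ (rather than a deterministic one) so that each regime is handled on the correct event, and recognizing that the limit laws in (b) and (d) are genuine $(1-r)$-versus-$r$ mixtures of a Gaussian and a point mass rather than a single distribution. Checking that the lower-order contributions (the "$+1$" shift and the $(n-1)$-versus-$n$ discrepancy in the mean and variance) are $o(\sqrt n)$ is routine.
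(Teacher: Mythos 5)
Your decomposition is exactly the one the paper uses (its own proof is just as terse): condition on $X_1$, note that on $\{X_1=0\}$ the walk is frozen so $N_n^*\equiv 0$, while on $\{X_1\neq 0\}$ the indicators $I_2^*,I_3^*,\dots$ are i.i.d.\ Bernoulli$(1-r)$, then apply the classical SLLN/CLT on each branch and mix. Parts (b) and (d) are handled correctly and in the same way as the paper: the random centering $n(1-r)I_1^*$ makes each branch converge to its own limit, the $O(1)$ discrepancies in the mean and the $(n-1)$-versus-$n$ variance are $o(\sqrt n)$, and the identity $N_n^*-n(1-r)I_1^*=-\big(N_n-nr-n(1-r)I_1\big)$ together with the symmetry of the Gaussian transfers (b) to (d).

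The one genuine problem is the final step of your argument for (a) (and hence (c)). You correctly obtain $N_n^*/n\asto(1-r)I_1^*$, a nondegenerate two-valued random variable, and then declare that since its expectation is $(1-r)^2$ ``this is the content of (a).'' It is not: almost sure convergence to a random variable is not almost sure convergence to its mean. Indeed, on the event $\{X_1=0\}$, of probability $r>0$, one has $N_n^*/n\equiv 0$ for all $n$, so $N_n^*/n$ cannot converge almost surely to the positive constant $(1-r)^2$; that constant only appears as $\lim_n E(N_n^*/n)$, equivalently as the expectation of the almost sure limit. Your computation is the correct one, and it shows that (a) and (c) can only hold with the random limits $(1-r)I_1^*$ and $1-(1-r)I_1^*$ (or as statements about expectations, or conditionally on $I_1^*=1$, where the limit is $1-r$, still not $(1-r)^2$). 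You should state this discrepancy explicitly rather than identify the random limit with its mean; as written, that sentence is a non sequitur, even though everything surrounding it is sound.
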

\begin{proof} The results for the starred process follows from the arguments preceding the statement. For the nonstarred process we use the additional facts that
\[E(N_n)=n-E(N^*_n)=n-n(1-r)^2 +{\cal O}(1)\sim n\,r\,(2-r)\ttt{as}\nifi,\] 
and that
\[N_n^*-n(1-r)I_1^*=n-N_n -n(1-r)(1-I_1)=-N_n+nr+n(1-r)I_1,
\]
together with the symmetry of the normal distribution.\vsb\end{proof}

\section{The case $\cfn =\sigma\{X_n\}$}
\setcounter{equation}{0}
In \cite{113}, Section 7 we found that there will be almost surely a finite number of non-zero steps. In fact, with $\tau=\min\{n:X_n=0\}$, the event $\{\tau\geq n\}$ occurs precisely if there is no summand that equals to zero among the first $n$ ones. Assuming that $X_1=1$ it follows that $P(\tau\geq n)=(1-r)^{n-1}$,  which shows that
$\tau$ has a geometric distribution with mean $1/r$.

As for the starred counting process it follows from the above that 
\[ P(N_n^*=k)=(I^*_1=1,\ldots,I^*_{k}=1, I^*_{k+1}=0)=(1-r)^{k}r \ttt{for} k=0,\dots, n-1\,,\]
and that $P(N^*_n=n)=P(\tau\geq n+1)=(1-r)^n$.
The generating function therefore turns out as
\[g_{N^*_n}(t)= r \sum_{k=0}^{n-1} (1-r)^{k}t^k+(1-r)^nt^n=\frac{r\,(1-((1-r)t)^{n}}{1-(1-r)t}+(1-r)^nt^n.\]
Hence, for $t<1/(1-r)$,
\[g_{N^*_n}(t)\to \frac{r}{1-(1-r)t} =g(t) \ttt{ as} \nifi\,,\]
which is the generating function of a geometric random variable with mean $(1-r)/r$; in particular, the generating function of $\tau-1$.

The following theorem emerges.
\begin{theorem} \label{thm1x3} \qquad $N^*_n =n-N_n\asto Z \ttt{as}\nifi$,\\[2mm]
where $Z= \tau-1$ is a geometric random variable with  mean $\frac{1-r}{r}$. Moreover, all moments converge.
\end{theorem}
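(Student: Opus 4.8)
The plan is to upgrade the convergence in distribution that the generating-function computation above already yields into almost sure convergence together with convergence of all moments, and the single fact that drives everything is that the walk is \emph{absorbed} at $0$. Here $\memn=\{n\}$, so in (\ref{23}) the index $K\equiv n$ and $\xnp$ equals $\pm X_n$ with probability $p+q=1-r$ and $0$ with probability $r$; consequently, once some $X_n=0$ one has $\pm X_n=0$ as well, so $X_m=0$ for every $m\ge n$. Thus, with $\tau=\min\{n:X_n=0\}$, the steps $X_1,\dots,X_{\tau-1}$ are nonzero while $X_m=0$ for all $m\ge\tau$, and the total number of nonzero steps is exactly $\tau-1$.

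First I would note that $N^*_n=\sum_{k=1}^n I^*_k$ is nondecreasing in $n$, being a partial sum of the nonnegative indicators $I^*_k$, and that the absorption property gives the closed form $N^*_n=\min(n,\tau-1)$. Since $\tau$ is geometric it is finite almost surely, so for almost every $\omega$ there is an $n_0(\omega)$ with $N^*_n(\omega)=\tau(\omega)-1$ for all $n\ge n_0(\omega)$; hence $N^*_n$ is eventually constant and $N^*_n\uparrow Z:=\tau-1$ almost surely. This is genuinely stronger than the convergence in distribution obtained from the generating functions. The law of $Z$ is read off by letting $\nifi$ in the already-computed $P(N^*_n=k)=(1-r)^k r$, giving $P(Z=k)=(1-r)^k r$ for $k\ge0$, a geometric distribution with mean $(1-r)/r$; equivalently, the ``is this step zero?'' decisions are independent $\Be(r)$ trials until the first zero, so $Z=\tau-1$ counts the nonzero steps preceding it.

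For the convergence of moments I would invoke monotone convergence: since $N^*_n=\min(n,\tau-1)\uparrow Z$ and $x\mapsto x^m$ is nondecreasing on $[0,\infty)$, we have $(N^*_n)^m\uparrow Z^m$, so $E\big((N^*_n)^m\big)\to E(Z^m)$ for every $m\ge1$, the limits being finite because a geometric law has moments of every order. (Alternatively, the generating functions converge on $[1,1/(1-r))$, a set containing a neighbourhood of $1$, which forces convergence of all factorial moments.) The statement for the number of zeroes is then the identity $N_n=n-N^*_n$. There is no real obstacle here; the only point needing care is to recognise that the preceding computation delivers only convergence in distribution, and that it is the pathwise absorption at $0$ which promotes this to the claimed almost sure, indeed eventually constant, behaviour and thence, via monotonicity, to the convergence of every moment.
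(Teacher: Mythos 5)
Your proposal is correct and follows essentially the same route as the paper: almost sure convergence from monotonicity and boundedness of $N^*_n$ (via absorption at zero), identification of the limit as $\tau-1$, and convergence of moments from the convergence of the generating functions (your monotone-convergence argument is an equivalent alternative). The explicit identity $N^*_n=\min(n,\tau-1)$ is a nice tidy way to package what the paper states more informally, but it is the same argument.
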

\begin{proof}
Almost sure convergence holds since $N_n^*$ is monotone increasing and bounded almost surely.  
That $Z=\tau-1$ follows from the fact that if the process stops at step $\tau$, then the zero that ends the process is preceded by $\tau-1$ nonzero summands. The convergence of the generating functions implies the rest.\vsb
\end{proof}
\begin{remark}Note that \emph{$\{I^*_n\,, n\geq1\}$ is a two state Markov chain where one state is absorbing.}\vsb
\end{remark}

\section{The case $\cfn =\sigma\{X_1,X_n\}$}
\setcounter{equation}{0}
In this subsection we prove the analog  for the case when the elephant remembers the first and the most recent steps.
\begin{theorem}
In the present situation,\\[1.5mm]
\emph{(a)}  $\hspace*{4mm} \displaystyle{ \frac{N^*_n}{n} \asto \frac{(1-r)^2}{1+r}  \ttt{as} \nifi\,;}$\\[2mm]
\emph{(b)} $\hspace*{4mm}\displaystyle{ P\bigg(\frac{N^*_n-n\cdot\dfrac{1-r}{1+r}\cdot I^*_1}{\sqrt{n}}\leq x \bigg) \dto (1-r){\cal N}_{0,{\sigma^*}^2}(x)\, +r\,\, \delta_0(x)\ttt{as}\nifi.}$\\[2mm]
For the number of zeroes we obtain\\[1.5mm]
\emph{(c)}  $\hspace*{4mm}\displaystyle{\frac{N_n}{n} \asto \frac{r(3-r)}{1+r}\ttt{as} \nifi\,;}$\\[2mm]
\emph{(d)} $\hspace*{4mm}\displaystyle{ P\bigg(\frac{N_n-n\cdot\dfrac{2r}{1+r}-n\cdot\dfrac{1-r}{1+r}\cdot I_1}{\sqrt{n}}\leq x \bigg) \dto (1-r){\cal N}_{0,{\sigma^*}^2}(x)\, +r\,\, \delta_0(x)\ttt{as}\nifi \,,}$  \\[2mm]
where  ${\sigma^*}^2=\frac{6r(1-r)}{(1+r)^2}$.
\end{theorem}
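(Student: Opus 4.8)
The plan is to reduce everything to the indicator sequence $\{I^*_n\}$ and to recognise it, conditionally on the first step, as a two-state Markov chain. Applying the memory rule (\ref{uli}) to the indicators gives
\[ E(I^*_{n+1}\mid\cfn)=\tfrac{1-r}{2}\bigl(I^*_1+I^*_n\bigr)\qquad(n\ge 2),\]
since a next step is non-zero exactly when the remembered site ($1$ or $n$, each chosen with probability $1/2$) carries a non-zero value and the ``stay-put'' option is not taken (probability $1-r$). Because the right-hand side depends only on $I^*_1$ and $I^*_n$, the process $\{I^*_n,\,n\ge1\}$ is, conditionally on the value of $I^*_1$, a time-homogeneous Markov chain on $\{0,1\}$. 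I would therefore split according to the two possibilities $\{I^*_1=1\}$, of probability $1-r$, and $\{I^*_1=0\}$, of probability $r$, and treat them separately.

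On the event $\{I^*_1=0\}$ the first step is a zero that stays in the memory forever, so the transition probabilities become $P(1\to1)=\tfrac{1-r}{2}$ and $P(0\to0)=1$; the state $0$ is absorbing and is reached after a finite (essentially geometric) number of steps. Hence $N^*_n$ is almost surely bounded on this event, which forces $N^*_n/n\asto 0$ and $N^*_n/\rn\to 0$. This is exactly the degenerate component $r\,\delta_0$ in parts (b) and (d).

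On the event $\{I^*_1=1\}$ the chain has transition matrix $\left(\begin{smallmatrix}\frac{1+r}{2}&\frac{1-r}{2}\\ r&1-r\end{smallmatrix}\right)$, which is irreducible and aperiodic with stationary vector $(\pi_0,\pi_1)=\bigl(\tfrac{2r}{1+r},\tfrac{1-r}{1+r}\bigr)$. The ergodic theorem then yields $N^*_n/n=\tfrac1n\sum_{k\le n}I^*_k\asto\pi_1=\tfrac{1-r}{1+r}$, and for the fluctuations I would invoke the central limit theorem for additive functionals of a finite ergodic chain---via the martingale decomposition obtained by solving the Poisson equation for $f=\ind_{\{\cdot=1\}}$, or equivalently by summing the geometrically decaying autocovariances $\cov_\pi(I^*_0,I^*_k)=\pi_0\pi_1\,\lambda^k$, where $\lambda=\tfrac{1-r}{2}$ is the subdominant eigenvalue. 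This gives $\bigl(N^*_n-n\pi_1\bigr)/\rn\dto\mathcal N_{0,{\sigma^*}^2}$ with the asymptotic variance ${\sigma^*}^2=\pi_0\pi_1\,\frac{1+\lambda}{1-\lambda}$ read off from the standard formula.

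Finally I would assemble the pieces. Combining the two conditional limits gives the almost sure statement $N^*_n/n\asto \tfrac{1-r}{1+r}I^*_1$, whose expectation is the constant $\tfrac{(1-r)^2}{1+r}$ recorded in (a); averaging the two conditional distributions with weights $1-r$ and $r$ produces the mixture in (b). The non-starred statements (c) and (d) follow from $N_n=n-N^*_n$ and $I_1=1-I^*_1$: a short computation shows that the centred quantity in (d) equals $-\bigl(N^*_n-n\tfrac{1-r}{1+r}I^*_1\bigr)/\rn$, so (d) is immediate from (b) and the symmetry of the normal law, while taking expectations yields (c). The main obstacle is the second half of the third paragraph---pinning down ${\sigma^*}^2$ exactly and justifying the mixed limit law, since the chain is started off-stationary and the two conditioning events contribute a non-degenerate Gaussian and a point mass respectively; the remaining algebra (computing $\pi$ and $\lambda$ and transferring to the non-starred walk) is routine.
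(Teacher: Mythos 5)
Your outline follows essentially the same route as the paper: both arguments start from $E(I^*_{n+1}\mid\cgn)=\frac{1-r}{2}(I^*_1+I^*_n)$, split on the value of $I^*_1$, observe that on $\{I^*_1=0\}$ the walk is frozen at zero, and on $\{I^*_1=1\}$ exploit the fact that $\{I^*_n\}$ is an ergodic two-state Markov chain to get the strong law and the Gaussian component, assembling (b) as the $(1-r,r)$ mixture and deducing (c), (d) from $N_n=n-N^*_n$ and symmetry. Two small inaccuracies on your side: on $\{I^*_1=0\}$ the chain \emph{starts} in the absorbing state, so $N^*_n\equiv 0$ exactly and there is no geometric absorption time; and the almost sure limit in (a) is really the random variable $\frac{1-r}{1+r}I^*_1$, whose expectation is the stated constant (the paper's statement shares this imprecision). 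The one genuine divergence is how ${\sigma^*}^2$ is obtained: the paper grinds out $E(N^*_n)$, $E(N^*_nI^*_n)$ and $E((N^*_n)^2)$ by explicit recursions and subtracts, whereas you read the asymptotic variance off the stationary autocovariances as $\pi_0\pi_1\frac{1+\lambda}{1-\lambda}$.

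That divergence matters, because the two answers disagree, and yours appears to be the correct one. With $\pi_0=\frac{2r}{1+r}$, $\pi_1=\frac{1-r}{1+r}$ and $\lambda=\frac{1-r}{2}$ (all of which you computed correctly), your formula gives
\[{\sigma^*}^2=\frac{2r(1-r)}{(1+r)^2}\cdot\frac{3-r}{1+r}=\frac{2r(1-r)(3-r)}{(1+r)^3},\]
which differs from the theorem's $\frac{6r(1-r)}{(1+r)^2}=\frac{6r(1-r)(1+r)}{(1+r)^3}$ for every $r\in(0,1)$. Redoing the paper's own moment computation confirms your value: the penultimate line of (\ref{varmisch}) simplifies as
\[\frac{2r(1-r)(7-r)}{(1+r)^3}-\frac{8r(1-r)}{(1+r)^3}=\frac{2r(1-r)\bigl((7-r)-4\bigr)}{(1+r)^3}=\frac{2r(1-r)(3-r)}{(1+r)^3},\]
not to $\frac{6r(1-r)(1+r)}{(1+r)^3}$. (There is also an inconsequential typo earlier: the constant term of $E(N^*_nI^*_n)$ should be $\frac{8r(1-r)}{(1+r)^3}$ rather than $\frac{8r(1-r)^2}{(1+r)^3}$, but the subsequent constant $\frac{(1-r)(1+14r-3r^2)}{(1+r)^3}$ is consistent with the corrected value, so this does not propagate.) In short: your approach is sound, the closed-form variance computation is cleaner than the paper's recursions, and it exposes an arithmetic slip in the stated value of ${\sigma^*}^2$; nothing else in your outline would fail.
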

\begin{proof} If $X_1=0$ the random walk stays put at zero. We therefore suppose in the following that $X_1\neq0$, and, hence, that $I_1^*=1$. Then, $E(I_2^*\mid I_1^*)= 1-r=E(I_2^*)$, and, generally, for $n\geq 1$,
\[
E(I_{n+1}^*\mid\cfn)=\frac{1-r}{2}I_1^* + \frac{1-r}{2}I_n^*=\frac{1-r}{2}+\frac{1-r}{2}I_n^*,\]
so that
\[E(I_{n+1}^*)=\frac{1-r}{2}+\frac{1-r}{2}E(I_n^*),\]
which, according to Proposition \ref{diff}, tells us that
\[E(I_n^*)=\frac{(1-r)/2}{1-(1-r)/2}+o(1)=\frac{1-r}{1+r}+o(1)\ttt{as}\nifi.\]
Next, adding the extreme members in the first display yields, for $n \ge 1$,
\[E(N_{n+1}^*)-1=\frac{n(1-r)}{2}+\frac{1-r}{2}E(N_n^*),\]
so that, via Proposition \ref{diff}, 
\bea
E(N_n^*)&=&\frac{(n-1)((1-r)/2)+1}{1-\frac{1-r}{2}}
            -  \frac{((1-r)/2)(n-1)((1-r)/2+1)}{n(1-\frac{1-r}{2})^2}\big(1+o(1)\big)\nonumber\\
 &=& \frac{n(1-r)}{1+r}-\frac{1-r}{1+r}+\frac{2}{1+r}-\frac{(1-r)^2}{(1+r)^2}+o(1)\nonumber\\
&=& \frac{n(1-r)}{1+r}+   \frac{4r}{(1+r)^2}+o(1)\ttt{as}\nifi. \label{egemischt}
\eea   
As for second moments (note that $I^2=I$),
\beaa
E((N_{n+1}^*)^2\mid\cfn)&=&(N_n^*)^2+2N_n^*E(I_{n+1}^*\mid\cfn)+E((I_{n+1}^*)^2\mid\cfn)\\[2mm]
&=&(N_n^*)^2+2N_n^*\Big(\frac{1-r}{2}+\frac{1-r}{2}I_n^*\Big) +\frac{1-r}{2}+\frac{1-r}{2}I_n^*,
\eeaa
and, hence,
\bea
E((N_{n+1}^*)^2)&=&E((N_{n}^*)^2)+(1-r)E(N_n^*)+(1-r)E(N_n^*I_n^*)+\frac{1-r}{2}\nonumber\\
&&\hskip2pc+\,\frac{1-r}{2}\Big(\frac{1-r}{1+r}+o(1)\Big)\nonumber\\
&=&E((N_{n}^*)^2)+(1-r)E(N_n^*)+(1-r)E(N_n^*I_n^*)\nonumber\\
&&\hskip2pc+\,\frac{1-r}{1+r}+o(1)\ttt{as}\nifi.
\label{momzprel}\eea
In order to continue we need the mixed moment:
\beaa
E(N_{n+1}^*I_{n+1}^*\mid\cfn)&=&N_n^*E(I_{n+1}^*\mid\cfn)+E((I_{n+1}^*)^2\mid\cfn)\\[2mm]
&=&N_n^*\Big(\frac{1-r}{2}+\frac{1-r}{2}I_n^*\Big)+\frac{1-r}{2}+\frac{1-r}{2}I_n^*,
\eeaa
so that
\beaa
E(N_{n+1}^*I_{n+1}^*)&=&\frac{1-r}{2}E(N_{n}^*I_{n}^*)+\frac{1-r}{2}E(N_n^*)
+\frac{1-r}{2}+\frac{1-r}{2}E(I_n^*)\\[1mm]
&=&\frac{1-r}{2}E(N_{n}^*I_{n}^*)+\frac{n(1-r)^2}{2(1+r)} +\frac{2r(1-r)}{(1+r)^2}
+\frac{1-r}{2}+\frac{(1-r)^2}{2(1+r)}\\[1mm]
&=&\frac{1-r}{2}E(N_{n}^*I_{n}^*)+\frac{n(1-r)^2}{2(1+r)}+\frac{(1-r)(1+3r)}{(1+r)^2}.
\eeaa
An application of Proposition \ref{diff}(i) shows that
\beaa
E(N_{n}^*I_{n}^*)&=&\sum_{k=0}^{n-2} \Big(\frac{1-r}{2}\Big)^k\cdot\frac{(n-1-k)(1-r)^2}{2(1+r)}
+\frac{(1-r)(1+3r)}{(1+r)^2}\sum_{k=0}^{n-2}\Big(\frac{1-r}{2}\Big)^k +o(1)\\
&=&\frac{(n-1)(1-r)^2}{2(1+r)}\sum_{k=0}^{n-2}\Big(\frac{1-r}{2}\Big)^k
-\frac{(1-r)^2}{2(1+r)}\sum_{k=0}^{n-2}k\,\Big(\frac{1-r}{2}\Big)^k\\
&&\hskip2pc+\, \frac{2(1-r)(1+3r)}{(1+r)^3}+o(1)\nonumber\\
&=& \frac{n(1-r)^2}{(1+r)^2} -\frac{(1-r)^2}{(1+r)^2}- \frac{(1-r)^3}{(1+r)^3}+ \frac{2(1-r)(1+3r)}{(1+r)^3}+o(1)\nonumber\\
&=&n\cdot\frac{(1-r)^2}{(1+r)^2}+\frac{8r(1-r)^2}{(1+r)^3}+o(1)\ttt{as}\nifi.\label{gemischtes}
\eeaa
Inserting this into (\ref{momzprel})  shows that
\beaa
E((N_{n+1}^*)^2)&=&E((N_{n}^*)^2)+(1-r)\Big(n\cdot\frac{1-r}{1+r}+\frac{4r}{(1+r)^2}\Big)\\
&&\hskip2pc+\,(1-r)\Big(n\cdot\frac{(1-r)^2}{(1+r)^2}+\frac{8r(1-r)^2}{(1+r)^3}\Big)+\frac{1-r}{1+r}+o(1)\\
&=&E((N_{n}^*)^2) +n\cdot\frac{2(1-r)^2}{(1+r)^2}+\frac{(1-r)(1+14r-3r^2)}{(1+r)^3}+o(1)\ttt{as}\nifi,
\eeaa
after which telescoping yields
\bea
E((N_{n}^*)^2)&=&\frac{n(n-1)}{2}\cdot\frac{2(1-r)^2}{(1+r)^2} +n\cdot\frac{(1-r)(1+14r-3r^2)}{(1+r)^3}
+o(n)\nonumber\\
&=& n^2\cdot\frac{(1-r)^2}{(1+r)^2} + n\cdot\frac{2r(1-r)(7-r)}{(1+r)^3}+o(n)\ttt{as}\nifi.\label{zweites}
\eea
Putting things together, finally, leads to the variance
\bea\label{mischvarianz}
\var(N_n^*)&=& n^2\cdot\frac{(1-r)^2}{(1+r)^2} + n\cdot\frac{2r(1-r)(7-r)}{(1+r)^3}+o(n)
-\Big(\frac{n(1-r)}{1+r}+   \frac{4r}{(1+r)^2}+o(1)\Big)^2\nonumber\\
&=& n\cdot\Big(\frac{2r(1-r)(7-r)}{(1+r)^3}-2\frac{4r(1-r)}{(1+r)^3}\Big)+o(n)\nonumber\\
&=&n\cdot\frac{6r(1-r)(1+r)}{(1+r)^3}+o(n)= n\cdot\frac{6r(1-r)}{(1+r)^2}+o(n)\nonumber\\
&=&n\cdot{\sigma^*}^2 +o(n)\ttt{as}\nifi.\label{varmisch}
\eea
Being in the branch with $I^*_1=1$ we are faced with a stationary ergodic Markov chain, which asserts the validity of  the  strong law, and, via  conditioning on the two branches, the central limit theorem for the starred sequence. 

Noticing that
\[N_n^* -n\cdot\frac{1-r}{1+r}I_1^*=n-N_n-n\cdot\frac{1-r}{1+r}(1-I_1)
=-\Big(N_n-n\cdot\frac{2r}{1+r}-n\cdot\frac{1-r}{1+r}I_1\Big),
\]
that the variances of the two counting processes are the same and that the standard normal distribution is symmetric,
establishes the second part.\vsb
\end{proof}

\section{Remarks}
\begin{remark}\emph{(i) When $\cfn=\sigma \{X_{n-1},X_n\}$, then $E(I^*_n)$ again tends to zero geometrically fast, and, hence, $I^*_n\neq0$ only finitely often w.p.1. Similarly in the case when $\cfn=\sigma \{X_{n-m},\ldots,X_n\}$ for some $m \in \mathbb{N}$.\\[1mm]
(ii) Whereas the correlation Cor$(I_n^*, I_{n+1}^*)={\cal O}(n^{-r})$ as $\nifi$ when 
$\{\cfn=\sigma \{X_1,\ldots,X_n\}$, the  correlation tends to a nonvanishing limit in all other cases.\\[1mm]
(iii) In the cases considered above the sequences $\{N^*_n\,,n\geq1\}$ show different asymptotic behaviors. It would be of interest to learn more about the change points.\\[1mm]
(iv) Related to (iii) and as was mentioned in our two predecessors, \cite{111, 113},  one might think of cases where the length of the memory depends on $n$, typically $\log n$ or some power of $n$. \\[1mm]
(v) The asymptotics fo the ERW obviously depend on $p$ and $q$. This is equally obviously not the case for the counting processes in this sequel, since the only point of interest here is whether there is a zero or not. This is why all results and computations only involve $r$. In other words, $p$ and $q$ may vary along the way as long as their sum, $r$, remains constant.}\vsb
\end{remark}

\subsection*{Acknowledgement} We wish to thank Svante Janson for his remark ''det blir en v\"aldig massa nollor'' (there will be a lot of zeroes) when one of us (A.G.) introduced him to the world of \erw s.

\medskip\noindent {\small Allan Gut, Department of Mathematics,
Uppsala University, Box 480, SE-751\,06 Uppsala, Sweden;\\
Email:\quad \texttt{allan.gut@math.uu.se}\\
URL:\quad \texttt{http://www.math.uu.se/\~{}allan}}
\\[4pt]
{\small Ulrich Stadtm\"uller, Ulm University, Department of Number
Theory
and Probability Theory,\\ D-89069 Ulm, Germany;\\
Email:\quad \texttt{ulrich.stadtmueller@uni-ulm.de}\\
URL:\quad
\texttt{http://www.mathematik.uni-ulm.de/en/mawi/institute-of-number-theory-and-probability-\\theory/people/stadtmueller.html}}

\end{document}